\newtheorem{thm}{Theorem}[section]
\newtheorem{lem}[thm]{Lemma}
\theoremstyle{definition}
\newtheorem{defn}[thm]{Definition}
\theoremstyle{remark}
\numberwithin{equation}{section}
\newcommand{\beas}{\begin{eqnarray*}}
\newcommand{\eeas}{\end{eqnarray*}}
\newcommand{\bes} {\begin{equation*}}
\newcommand{\ees} {\end{equation*}}
\newcommand{\be} {\begin{equation}}
\newcommand{\ee} {\end{equation}}
\newcommand{\bea} {\begin{eqnarray}}
\newcommand{\eea} {\end{eqnarray}}
\newcommand{\ra} {\rightarrow}
\newcommand{\txt} {\textmd}
\newcommand{\ds} {\displaystyle}
\newcommand{\R}{\mathbb R}
\newcommand{\C}{\mathbb C}
\newcommand{\N}{\mathbb N}
\newcommand{\Z}{\mathbb Z}
\newcommand{\f}{\frac}
\newcommand{\al}{\alpha}
\newcommand{\pl}{\partial}
\newcommand{\lt}{\left}
\newcommand{\rt}{\right}
\newcommand{\g}{\mathfrak{g}}
\newcommand{\z}{\mathfrak{z}}
\newcommand{\vv}{\mathfrak{v}}
\newcommand{\h}{\mathfrak{h}}
\newcommand{\la}{\lambda}
\begin{document}

\title[Roe-Strichartz Theorem on two step nilpotent Lie groups] {Roe-Strichartz Theorem on two step nilpotent Lie groups}

\author[Sayan Bagchi]{Sayan Bagchi}
\address[Sayan Bagchi]{Department of Mathematics and Statistics, Indian Institute of Science Education and Research, Campus Road, Mohanpur, West Bengal 741246, India} \email{sayansamrat@gmail.com}

\author[Ashisha Kumar]{Ashisha Kumar}
\address[Ashisha Kumar]{Discipline of Mathematics, Indian Institute of Technology Indore, Khandwa Road, Simrol, Indore, 453552 India} \email{akumar@iiti.ac.in}

\author[Suparna Sen]{Suparna Sen}
\address[Suparna Sen]{Department of Pure Mathematics, University of Calcutta, 35 Ballygunge Circular Road, Kolkata 700019, India} \email{suparna29@gmail.com}

%%% ----------------------------------------------------------------------

\begin{abstract}
Strichartz characterized eigenfunctions of the Laplacian on Euclidean spaces by boundedness conditions which generalized a result of Roe for the one-dimensional case. He also proved an analogous statement for the sublaplacian on the Heisenberg groups. In this paper, we extend this result to connected, simply connected two step nilpotent Lie groups.
\end{abstract}

\subjclass[2010]{Primary 22E25; Secondary 22E30, 43A80}

\keywords{sublaplacian, eigenfunction, two step nilpotent Lie group}

%%% ----------------------------------------------------------------------
\maketitle
%%% ----------------------------------------------------------------------

\section{Introduction}
J. Roe proved a result (see \cite{Ro}) that if a function on the real line has the property that all its derivatives and antiderivatives are bounded by a uniform bound then it is a linear combination of sine and cosine functions. Strichartz extended Roe's result to $\mathbb{R}^n$ in \cite{S} as follows:
\begin{thm}
Let $\{f_k\}_{k\in \Z}$ be a doubly infinite sequence of
functions on $\R^n$ satisfying \beas
\Delta f_k &=& f_{k+1},\\
\|f_k\|_{\infty} &\leq& M, \quad \text{for all}\ k \in \Z, \eeas
where $\Delta$ is the Laplacian on $\R^n$ i.e. $\Delta =
\dfrac{\partial^2}{\partial x_1^2} + \cdots +
\dfrac{\partial^2}{\partial x_n^2}.$ Then $\Delta f_0=-f_{0}.$
\end{thm}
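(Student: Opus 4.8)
The plan is to transfer everything to the Fourier side. Writing $u:=\widehat{f_0}\in\mathcal S'(\R^n)$ and using the normalization in which $\widehat{\Delta g}=-|\xi|^2\widehat g$, the recursion $\Delta f_k=f_{k+1}$ becomes $\widehat{f_{k+1}}=-|\xi|^2\widehat{f_k}$; iterating it in both directions yields the distributional identities
\[
\widehat{f_k}=(-|\xi|^2)^k u\ \ (k\ge0),\qquad u=(-|\xi|^2)^m\widehat{f_{-m}}\ \ (m\ge0).
\]
Since $\Delta f_0=-f_0$ is equivalent to $(|\xi|^2-1)u=0$, the whole problem is to locate and describe $u$, and I would do this in two steps.

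First I would show that $\mathrm{supp}\,u\subseteq S^{n-1}:=\{|\xi|=1\}$. Given $\phi\in C_c^\infty$ with $\mathrm{supp}\,\phi\subset\{|\xi|<1\}$, put $\rho:=\sup_{\mathrm{supp}\,\phi}|\xi|<1$; pairing the identity $u=(-|\xi|^2)^m\widehat{f_{-m}}$ against $\phi$ and transferring the Fourier transform onto $\phi$ gives
\[
|\langle u,\phi\rangle|=\bigl|\langle f_{-m},\,\widehat{|\xi|^{2m}\phi}\,\rangle\bigr|\le M\,\bigl\|\widehat{|\xi|^{2m}\phi}\,\bigr\|_{1}.
\]
The elementary estimate $\|\widehat\psi\|_1\le C_n\sum_{|\alpha|\le n+1}\|\partial^\alpha\psi\|_1$ combined with the homogeneity bound $|\partial^\beta|\xi|^{2m}|\le C(n,|\beta|)(2m)^{|\beta|}|\xi|^{2m-|\beta|}$ on $\mathrm{supp}\,\phi$ shows the right side is $\le C(n,\phi,\rho)\,m^{n+1}\rho^{2m}\to0$, so $\langle u,\phi\rangle=0$; the interior of the ball is killed. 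The mirror argument — pairing $u=(-1)^k|\xi|^{-2k}\widehat{f_k}$ (valid away from the origin) against $\phi$ supported in $\{|\xi|>1\}$ and letting $k\to\infty$ — kills the exterior. Hence $u$ is carried by the sphere, and in particular all the $f_k$, and all the $h_\ell$ introduced below, are real-analytic, being inverse Fourier transforms of compactly supported distributions.

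The heart of the matter is the second step: showing $u$ has transverse order $0$ along $S^{n-1}$. Being compactly supported, $u$ has finite order, so by the structure theorem for distributions carried by a $C^\infty$ hypersurface it has, in tubular coordinates $t=|\xi|-1$, the form $u=\sum_{j=0}^N\delta^{(j)}(t)\otimes v_j$ with $v_j\in\mathcal D'(S^{n-1})$, $v_N\neq0$, and $N$ the transverse order. As $|\xi|^2-1=t(2+t)$ is a defining function of $S^{n-1}$, multiplication by it drops the transverse order by exactly one and scales the top symbol by a nonzero factor; in particular $(|\xi|^2-1)^\ell u=0$ for $\ell>N$, while $(|\xi|^2-1)^N u=\delta(t)\otimes(c\,v_N)$ with $c\neq0$. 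Now I would expand $|\xi|^{2k}=\bigl(1+(|\xi|^2-1)\bigr)^k$ and multiply by $u$: the binomial sum truncates at $\ell=N$, and (using $\widehat{(\Delta+1)^\ell f_0}=(-1)^\ell(|\xi|^2-1)^\ell u$) this gives
\[
f_k=(-1)^k\sum_{\ell=0}^{N}\binom{k}{\ell}h_\ell,\qquad\text{where }\ h_\ell:=(-1)^\ell(\Delta+1)^\ell f_0=(-1)^\ell\sum_{i=0}^{\ell}\binom{\ell}{i}f_i\in L^\infty(\R^n),
\]
so $\|h_\ell\|_\infty\le2^\ell M$. For fixed $x$, the right side is $(-1)^k$ times a polynomial in $k$ of degree $\le N$ with leading coefficient $h_N(x)/N!$; since $\sup_k|f_k(x)|\le M$, necessarily $h_N\equiv0$ when $N\ge1$. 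But $h_N$ is the inverse Fourier transform of $(|\xi|^2-1)^N u=\delta(t)\otimes(c\,v_N)$, hence $h_N(x)=c'\langle v_N,\,e^{ix\cdot\omega}\rangle$ with $c'\neq0$; were this identically zero, differentiation in $x$ at $0$ would make $v_N$ annihilate every polynomial restricted to $S^{n-1}$, hence every spherical harmonic, hence $v_N=0$, contradicting $v_N\neq0$. Therefore $N=0$, so $u=\delta(t)\otimes v_0$ and $(|\xi|^2-1)u=t(2+t)\,\delta(t)\otimes v_0=0$; equivalently $\widehat{\Delta f_0}=-|\xi|^2u=-u=-\widehat{f_0}$, i.e.\ $\Delta f_0=-f_0$.

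The first step is a routine concentration-on-the-characteristic-variety estimate. The genuine obstacle is the second step, and within it the decisive point is the use of the \emph{uniform} bound $\|f_k\|_\infty\le M$ over \emph{all} $k\in\Z$ — not just $k=0$ — to force the transverse order down to zero; the density of the restricted plane waves $\{e^{ix\cdot\omega}|_{S^{n-1}}\}$ in $C^\infty(S^{n-1})$ is exactly what converts ``$h_N\equiv0$'' into ``$v_N=0$.'' In the two-step nilpotent setting the sphere is replaced by the joint spectrum of the sublaplacian (described through the Plancherel/representation theory of the group), and reproducing this second step in that geometry is where the main effort will be needed.
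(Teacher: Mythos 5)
Your proposal is correct and takes essentially the same approach as the paper: the paper does not reprove this Euclidean statement (it cites Strichartz for it), but its own proof of the main theorem is the group-theoretic transplant of exactly your argument --- iterate the recursion in both directions to show the relevant distribution is supported on the unit sphere, apply the structure theorem for distributions carried by the sphere, and use the uniform bound over all $k\in\Z$ to force the transverse order $N$ to be zero by comparing a bounded sequence with a degree-$N$ polynomial in $k$. The only cosmetic difference is at the very end, where you conclude directly from injectivity of the Euclidean Fourier transform, whereas the paper's group version must invoke a Wiener--Tauberian theorem at the corresponding step.
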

\noindent For some related results and different proofs of Roe-Strichartz Theorem on Euclidean spaces, we refer the reader to \cite{B, H, HR}. In \cite{S}, Strichartz showed that this result fails for hyperbolic $3$-space. In fact, this result is not true for any Riemannian symmetric space of noncompact type. However, some interesting modified versions of this result are proved in \cite{KRS, RS}.

Strichartz has proved this result on Heisenberg group corresponding to the sublaplacian on the Heisenberg group in \cite{S}. We wish to prove this result on connected, simply connected two step nilpotent Lie groups. To state our main theorem, some definitions are in order.

Let $G$ be a connected, simply connected two step nilpotent Lie group, $\g$ be the Lie algebra of $G$ and $\z$ denote the center of $\g$. We consider a subspace $\vv$ of $\g$ complementary to $\z$ so that $\g$ has the decomposition $\g = \vv \oplus \z$. We choose an inner product $\langle \cdot,\cdot \rangle$ on $\g$ so that the above decomposition is orthogonal. Since $G$ is nilpotent the exponential map from $\mathfrak g$ to $G$ is an analytic diffeomorphism. Thus the elements of $G$ can be identified with the elements of its Lie algebra $\g$ via the exponential map. We shall denote the element $\exp(V+T) \in G$ by $(V,T)$ where $V\in \vv$ and $T\in \z$. The product law on $G$ is given by the Baker-Campbell-Hausdorff formula
\bes
(V,T)(V',T')=\left(V+V',T+T'+\frac{1}{2}{[V,V']}\right), \: \:\:\: \txt{ for all } V,V'\in \vv, \:\: T,T'\in \z.
\ees
Let $\{V_1, V_2, \cdots, V_m\}$ be an orthonormal basis of $\vv.$ We define the sublaplacian $\mathcal{L}$ of $G$ by
\be \label{sublaplacian}
\mathcal{L} = V_1^2 + V_2^2 + \cdots + V_m^2
\ee
We will now state Roe-Strichartz Theorem for a connected, simply connected two step nilpotent Lie group $G$.

\begin{thm} \label{theorem}
Let $\{f_k\}_{k\in \Z}$ be a doubly infinite sequence of functions on a connected, simply connected two step nilpotent Lie group $G.$ Suppose there exists a constant $M>0$ such that
\bea
\label{seq} \mathcal{L}f_k &=& f_{k+1}, \quad \text{ for all}\ k \in \Z,\\
\label{bdd} \txt{ and } \quad \|f_k\|_{\infty} &\leq& M, \quad \text{ for all}\ k \in \Z.
\eea
Then $\mathcal{L}f_0=-f_{0}$.
\end{thm}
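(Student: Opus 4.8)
The plan I would follow is to encode $f_0$ into a scalar distribution on $\R$ via the functional calculus of $\mathcal{L}$ and to show that this distribution is a multiple of the Dirac mass at $-1$; the conclusion then follows because $t+1$ vanishes there. The basic analytic input is a theorem of Hulanicki: for $m\in C_c^\infty(\R)$ (more generally $m\in\mathcal{S}(\R)$) the operator $m(\mathcal{L})$ is convolution by a Schwartz function $K_m\in\mathcal{S}(G)$, and $m\mapsto K_m$ is continuous, so $\|K_m\|_{L^1(G)}\le C\,p(m)$ for a fixed seminorm $p$ as long as $\operatorname{supp}m$ stays in a fixed compact set. I will also use the canonical dilations $\delta_r(V,T)=(rV,r^2T)$ of $G$, under which $\mathcal{L}$ is homogeneous of degree $2$, that $G$ is unimodular (so $\mathcal{L}$ is formally self-adjoint and $\operatorname{spec}\mathcal{L}\subseteq(-\infty,0]$), and the relations $\mathcal{L}^{k}f_0=f_k$ ($k\ge 0$), $\mathcal{L}^{|k|}f_{-|k|}=f_0$ ($k\le0$), valid in $\mathcal{D}'(G)$ by \eqref{seq}. (If $\vv$ does not generate $\g$ then $\mathcal{L}$ is not hypoelliptic; in that case $G$ splits as $G_1\times\R^{\ell}$ with $\mathcal{L}$ acting only on the $G_1$ factor, and the statement reduces to the hypoelliptic case on $G_1$, where Hulanicki's theorem applies.)

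I would fix $g\in C_c^\infty(G)$ and define $T_g\in\mathcal{D}'(\R)$ by $T_g(m)=\langle f_0,\,m(\mathcal{L})g\rangle$, the pairing of $f_0\in L^\infty(G)$ with $m(\mathcal{L})g=g*K_m\in\mathcal{S}(G)$; this is continuous since $|T_g(m)|\le M\|g\|_1\|K_m\|_1$. Moving powers of $\mathcal{L}$ off $f_0$ by integration by parts and using $\mathcal{L}^k f_0=f_k$ gives the key estimate
\[
T_g(t^k m)=\langle f_k,\,m(\mathcal{L})g\rangle,\qquad\text{hence}\qquad \bigl|T_g(t^k m)\bigr|\le M\|g\|_1\|K_m\|_1\quad\text{for every }k\in\Z ,
\]
uniformly in $k$; this is exactly where the recursion \eqref{seq} and the uniform bound \eqref{bdd} both enter (for $k<0$ one writes $m(\mathcal{L})g=\mathcal{L}^{|k|}\bigl((t^{-|k|}m)(\mathcal{L})g\bigr)$ and invokes $\mathcal{L}^{|k|}f_{-|k|}=f_0$).

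Next I would show $T_g$ is supported at $\{-1\}$. If $m\in C_c^\infty(\R)$ with $-1\notin\operatorname{supp}m$, split $m=m_-+m_+$ with $\operatorname{supp}m_-\subseteq(-\infty,-1)$ and $\operatorname{supp}m_+\subseteq(-1,\infty)$; since $\operatorname{spec}\mathcal{L}\subseteq(-\infty,0]$ we may moreover take $\operatorname{supp}m_+$ to be a compact subset of $(-1,1)$. For $m_-$, write $T_g(m_-)=T_g\bigl(t^{k}\cdot(t^{-k}m_-)\bigr)=\langle f_k,(t^{-k}m_-)(\mathcal{L})g\rangle$ and let $k\to+\infty$: on $\operatorname{supp}m_-$ one has $|t^{-k}|\le(1+\eta)^{-k}$ for some $\eta>0$, so every seminorm of $t^{-k}m_-$, hence $\|K_{t^{-k}m_-}\|_1$, tends to $0$, giving $T_g(m_-)=0$. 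Symmetrically, $T_g(m_+)=\langle f_{-k},(t^{k}m_+)(\mathcal{L})g\rangle$ with $|t^{k}|\le(1-\eta)^{k}$ on $\operatorname{supp}m_+$, so $T_g(m_+)=0$. Hence $T_g=\sum_{j=0}^{J}c_j\,\delta_{-1}^{(j)}$ for some $J$ and $c_j=c_j(g)\in\C$. The decisive point — the genuine Roe-type phenomenon — is that $J=0$: choosing $m_0\in C_c^\infty(\R)$ with $m_0\equiv1$ near $-1$, Leibniz's rule gives $T_g(t^k m_0)=(-1)^k\sum_{j=0}^{J}c_j\,k(k-1)\cdots(k-j+1)$, which up to the factor $(-1)^k$ is a polynomial in $k$; by the key estimate it is bounded for all $k\in\Z$, and a polynomial bounded on $\Z$ is constant, forcing $c_j=0$ for $j\ge1$. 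Thus $T_g=c_0(g)\,\delta_{-1}$.

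To finish, it suffices to prove $\langle f_0,(\mathcal{L}+1)g\rangle=0$ for all $g\in C_c^\infty(G)$, since integration by parts gives $\langle f_0,(\mathcal{L}+1)g\rangle=\langle f_1+f_0,g\rangle$. I would take $\xi_n(t)=\xi_1(t/n)$ with $\xi_1\in C_c^\infty(\R)$, $\xi_1\equiv1$ near $0$; dilation-invariance of the $L^1$ norm gives $\|K_{\xi_n}\|_1=\|K_{\xi_1}\|_1$ with $\{K_{\xi_n}\}$ an approximate identity, so $\xi_n(\mathcal{L})h\to h$ in $L^1(G)$ for $h=(\mathcal{L}+1)g\in C_c^\infty(G)$. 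Therefore
\[
\langle f_0,(\mathcal{L}+1)g\rangle=\lim_{n}\bigl\langle f_0,\xi_n(\mathcal{L})(\mathcal{L}+1)g\bigr\rangle=\lim_{n}T_g\bigl((t+1)\xi_n\bigr)=\lim_{n}c_0(g)\,(t+1)\xi_n\big|_{t=-1}=0 .
\]
I expect the most delicate parts to be the order-reduction step just described and the verification that the Hulanicki calculus is available with the stated uniform $L^1$-control of $K_m$ on compact symbol sets (together with the reduction in the non-hypoelliptic case); a representation-theoretic route through the Plancherel formula and the Hermite description of $d\pi_\lambda(\mathcal{L})$ would replace $T_g$ by the spectral measure of $f_0$ along $\mathcal{L}$, but makes the ``polynomial bounded on $\Z$'' mechanism less transparent.
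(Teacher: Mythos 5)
Your proof is correct, and it takes a genuinely different route from the paper's. The paper goes through the explicit representation theory of two step MW groups: it builds eigenfunctions $h_\la$ out of the Hermite-type matrix coefficients $\Phi_{\al\al}^{d(\la)}$, defines a distribution on $\z^*\cong\R^k$ shown to be supported on the unit sphere, kills the higher-order terms by the same ``polynomial bounded in $k$ must be constant'' mechanism you use, and then --- this is the step you replace --- invokes Leptin's Wiener Tauberian theorem to pass from $\langle f_0+f_1,F_\al\rangle=0$ for a family $\{F_\al\}$ to $f_0+f_1=0$; non-MW groups are handled separately by embedding into a larger MW group. You instead push everything through the scalar functional calculus of $\mathcal{L}$: your distribution lives on the spectral axis $\R$ and is supported at $\{-1\}$, Hulanicki's theorem supplies the uniform $L^1$ control of the kernels $K_m$ that the paper obtains by hand from the derivative estimates on $\Phi_{\al\al}^{d(\cdot)}$ in Lemmas \ref{lemphi} and \ref{distlemma}, and the Tauberian step is replaced by the elementary approximate identity $\xi_n(\mathcal{L})h\to h$ in $L^1$. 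Your argument is shorter, avoids the MW/non-MW dichotomy and the smooth dependence of the almost symplectic basis on $\la$ altogether, and would work verbatim on any stratified group; the price is importing Hulanicki's theorem (and essential self-adjointness of $\mathcal{L}$, which is standard) as black boxes, whereas the paper stays self-contained within two step representation theory. Two points deserve slightly more care than you give them, though neither is a gap: (i) in the reduction when $\vv$ does not generate $\g$, the splitting $\g=(\vv\oplus[\g,\g])\oplus\R^{\ell}$ with central last factor is correct, but since the $f_k$ are only bounded, transferring the distributional identity $\mathcal{L}f_k=f_{k+1}$ to almost every slice requires a short Fubini argument over a countable family of product test functions; (ii) the identity $T_g(t^km)=\langle f_k,m(\mathcal{L})g\rangle$ for $k<0$ requires $0\notin\operatorname{supp}m$, which you implicitly respect since negative $k$ is only used for symbols supported away from $0$, and the order-reduction step in fact needs only $k\ge 0$.
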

The main idea of our proof of Theorem \ref{theorem} is inspired from Strichartz's proof of the result on the Heisenberg group. However, the structure of general two step nilpotent Lie groups is much more complicated than the Heisenberg group. As a result, our proof is not at all a straight forward generalisation of Strichartz's proof. Firstly, we generalise the result on two step MW groups (to be defined subsequently). Then we use the idea in \cite{MR} to embed a two step non-MW group inside a bigger two step MW group to prove the result on two step non-MW groups. Unlike the Heisenberg group, the structure of two step MW groups is much more complicated in the sense that the symplectic basis depends on the elements of the dual. So we need to make significant changes in order to generalise the original idea of proof. Moreover, we need to invoke Wiener Tauberian theorem for two step nilpotent Lie groups in \cite{L} to prove the result.

Our paper is organised as follows: In the subsequent section we will discuss some preliminaries for two step nilpotent Lie groups. In the third section we will prove Theorem \ref{theorem} for two-step MW groups. The last section is devoted for proving the above theorem for two-step non-MW groups. Throughout the paper $C$ denotes a constant which may vary and $e_j$ denotes the unit vector in $\R^n$ whose $j$th coordinate is $1$ for $j = 1, \cdots, n.$

\section{Preliminaries on Two Step Nilpotent Lie Groups}

Representation theory of connected, simply connected two step nilpotent Lie groups and the Plancherel theorem is described in \cite{R}. In this section, we shall briefly describe the basic facts needed to make this paper self contained. For representation theory of general connected, simply connected nilpotent Lie groups, please see \cite{CG}.

Let $G$ be a connected, simply connected two step nilpotent Lie group and $\g,$ $\z,$ $\vv$ be as defined in the introduction. Let $\g^*, \vv^*$ and $\z^*$ be the dual vector spaces of $\g, \vv$ and $\z$ respectively. For each $\la \in \z^*$ we consider the alternating bilinear form $B_\la$ on $\vv$ defined by
\bes
B_\la(V,V')=\la([V,V']), \quad \txt{ for all }V, V'\in \vv.
\ees
\begin{defn}
Let $G$ be a connected, simply connected two step nilpotent Lie group. If the bilinear form $B_\la$ is nondegenerate for some $\la \in \z^*$ then we call $G$ a two step nilpotent Lie group with MW condition or two step MW group. Otherwise, if $B_\la$ is degenerate for all $\la \in \z^*,$ then $G$ is called a two step nilpotent Lie group without MW condition or two step non-MW group.
\end{defn}

We will now describe the representation theory of two step MW groups. Since we will prove the Roe-Strichartz theorem for non-MW groups by reducing it to the MW case, we refrain from describing the representation theory of two step non-MW groups. It is known that for a two step MW group the set defined by
\bes
\Lambda_0 = \{\la \in \z^* \mid B_{\la} \text{ is nondegenerate}\}
\ees
is a Zariski open subset of $\z^*$ and it gives a parametrization of the irreducible unitary representations of $G$ relevant to the Plancherel measure. Since $B_\la$ is nondegenerate, for each $\la \in \Lambda_0$ we have $\dim \vv = m = 2n$ for some $n \in \N$. Now, identifying $\vv$ with $\C^n,$ we consider an orthonormal basis $\{Z_1, Z_2, \cdots, Z_n\}$ of $\vv.$ From the properties of an alternating bilinear form, it follows that for a fixed $\la \in \Lambda_0,$ the orthonormal basis $\{Z_1, Z_2, \cdots, Z_n\}$ of $\vv$ can be transformed to another orthonormal basis $\{Z_1(\la), Z_2(\la), \cdots, Z_n(\la)\}$ of $\vv$ by a transformation $D_{\la},$ where $Z_j(\la) = X_j(\la) + i Y_j(\la),$ for $j = 1, 2, \cdots, n$ such that there exist positive numbers $d_j(\la)>0$ satisfying
\bes
\la([X_i(\la),Y_j(\la)])=\delta_{i,j}d_j(\la), \quad 1\leq i,j \leq n.
\ees
It is known that (see \cite{CRS}, \cite{MR}) one can find a smaller Zariski-open subset $\Lambda \subset \Lambda_0$ of $\z^*$ such that $d_j$ is a smooth function on $\Lambda.$ We note that $\Lambda$ is a set of full measure in $\R^k$, where $k$ is the dimension of $\z$. Further, we can also choose $X_j(\lambda)$ and $Y_j(\lambda)$ to depend smoothly on $\lambda.$ So it follows that the matrix coefficients of the transformation $D_\la$ depend smoothly on $\la.$

Let us fix $\la \in \Lambda.$ We define the subspaces of $\vv$ given by
\beas
\xi_\la &=& \txt{span}_{\R}\{X_1(\la),\cdots,X_n(\la)\}, \\
\eta_\la &=& \txt{span}_\R\{Y_1(\la),\cdots,Y_n(\la)\}.
\eeas
Then we have the decomposition
\bes
\g = \vv \oplus \z = \xi_\la \oplus \eta_\la \oplus \mathfrak z.
\ees
With respect to the above decomposition, we shall denote any element $(z,t) \in \g$ by $(x(\la),y(\la),t)$. Let $\{T_1, \cdots, T_k\}$ be an orthonormal basis of $\z$. We shall write
\bes
(x(\la),y(\la),t)=\sum_{j=1}^{n}{x_j(\la)X_j(\la)}+\sum_{j=1}^{n}{y_j(\la)Y_j(\la)}+\sum_{j=1}^{k}{t_jT_j}.
\ees
The basis $\{X_1(\la), Y_1(\la), \cdots, X_n(\la), Y_n(\la), T_1, \cdots, T_k \}$ of $\g$ is called an almost symplectic basis.

Let $\{ T_1^*, \cdots, T_k^* \}$ denote the dual basis in $\z^*$. Using the almost symplectic basis we now describe an irreducible unitary representation $\pi_{\la}$ of $G$ realized on $L^2(\eta_\la)$ by the following action
\bes
\left(\pi_\la(x(\la),y(\la),t)\phi\right)(\xi(\la))= e^{i\sum_{j=1}^{k}{\la_j t_j} + i\sum_{j=1}^{n}{d_j(\la)\left(x_j(\la)\xi_j(\la)+\frac{1}{2}x_j(\la)y_j(\la)\right)}} \phi(\xi(\la)+y(\la)),
\ees
where $\phi \in L^2(\eta_\la)$, $\la = \la_1 T_1^* + \cdots + \la_k T_k^*$ and $\xi(\la) = \sum_{j=1}^n \xi_j(\la) Y_j(\la)$. We define the Fourier transform of $f\in L^1(G)$ by the operator valued integral
\bes
\widehat{f}(\la)=\int_{\mathfrak z} \int_{\eta_\la} \int_{\xi_\la}{f(x(\la),y(\la),t) ~ \pi_{\la}(x(\la),y(\la),t)~dx(\la)~dy(\la)~dt}, \quad \txt{ for } \lambda \in \Lambda.
\ees
For $\la\in \Lambda$ and $f \in L^1(G)$ we consider the Euclidean Fourier transform of $f$ in the central variable given by
\be \label{centralft}
f^\la(z)=\int_{\mathfrak z} f(z,t) ~ e^{- i\la(t)}~dt, \quad \txt{ for } z \in \vv,
\ee
For $f, g \in L^1(\vv)$ the $\la$-twisted convolution of $f$ and $g$ is defined by
\be \label{twistedconv}
f*_{\la}g(z)=\int_{\vv}~f(z-w)~g(w)~e^{-{\frac{i}{2}}\la([z,w])}~dw,  \quad \txt{ for } z \in \vv.
\ee
It follows from (\ref{centralft}) and (\ref{twistedconv}) that for $f,g \in L^1(G)$ we have
\be \label{twistft}
(f*g)^{\la}(z)=f^{\la}*_{\la}g^{\la}(z),  \quad \txt{ for } z \in \vv.
\ee

We note that, the sublaplacian $\mathcal{L}$ defined in (\ref{sublaplacian}) can be written in terms of the almost symplectic basis by
\be \label{Lsymplectic}
\mathcal L=\sum_{j=1}^{n}\left(X_j(\la)^2+Y_j(\la)^2\right).
\ee
This follows from the fact that the right hand sides of (\ref{sublaplacian}) and (\ref{Lsymplectic}) are both left invariant differential operators on $G$ which agree at the identity since the Euclidean Laplacian is invariant under an orthonormal basis change. It is well known (see \cite{R}) that
\bes
\pi_{\la}(\mathcal L)=-\sum_{j=1}^{n}\lt(-\f{\pl}{\pl\xi_j^2}+d_j^2(\la)\xi_j^2\rt)=-H(d(\la)),
\ees
where $d(\la)=(d_1(\la),d_2(\la),\ldots,d_n(\la))$.

We define a unitary operator $U(r)$ on $L^2(\R^n)$ as
\bes
U(r)\phi(\xi)=\prod_{j=1}^{n}r_j^{1/4} \phi(\sqrt{r_1}\xi_1,\ldots,\sqrt{r_n}\xi_n), \quad \txt{ for } \phi \in L^2(\R^n),
\ees
where $r=(r_1,\cdots,r_n) \in (0,\infty)^n$ and $\xi = (\xi_1, \cdots, \xi_n) \in \R^n$. We also define for each $\la \in \Lambda$
\bes
\phi_{\al}^{d(\la)}=U(d(\la))\phi_\al, \quad \txt{ for } \al \in \N^n,
\ees
where $\phi_{\al}$ denote the normalized Hermite functions on $\R^n.$ It is known that for any representation $\pi$ of $G$ realized on a Hilbert space $\mathcal{H}$ and for any element $A$ of the universal enveloping algebra of left invariant differential operators on $G$ we have
\bes
A(\pi(g)u,w)=(\pi(g)\pi(A)u,w), \quad \txt{ for } u,w \in H,~ g \in G.
\ees
If we plug in $\pi=\pi_{\la},$ $A=\mathcal{L},$ $u = w = \phi_{\al}^{d(\la)}$ and $w = \phi_{\beta}^{d(\la)}$ respectively, then
we get that
\bes
\mathcal{L}\left(\pi_{\la}(g)\phi_{\al}^{d(\la)},\phi_{\beta}^{d(\la)}\right)
=\left(\pi_{\la}(g)\pi_{\la}(\mathcal{L})\phi_{\al}^{d(\la)},\phi_{\beta}^{d(\la)}\right).
\ees
Since it is known that (see \cite{PT})
\bes
\pi_{\la}(\mathcal{L})\phi_{\al}^{d(\la)}=- (2\al+1)\cdot d(\la) ~ \phi_{\al}^{d(\la)},
\ees
so it follows that
\bes
\mathcal{L}\left(\pi_{\la}(g)\phi_{\al}^{d(\la)},\phi_{\beta}^{d(\la)}\right) =- (2\al+1)\cdot d(\la)\left(\pi_{\la}(g)\phi_{\al}^{d(\la)},\phi_{\beta}^{d(\la)}\right).
\ees
We define
\bes
\Phi_{\al \beta}^{d(\la)}(z)=\left(\pi_{\la}(z,0)\phi_{\al}^{d(\la)},\phi_{\beta}^{d(\la)}\right), \quad \txt{ for } z \in \vv.
\ees
Noting that
\bes
\left(\pi_{\la}(z,t)\phi_{\al}^{d(\la)},\phi_{\beta}^{d(\la)}\right) = \Phi_{\al \beta}^{d(\la)}(z) ~ e^{i \la(t)},
\ees
it follows that
\be \label{eigenfunction}
\mathcal{L}\left(\Phi_{\al \beta}^{d(\la)}(z) e^{i \la(t)}\right)=-(2\al+1)\cdot d(\la)\left(\Phi_{\al \beta}^{d(\la)}(z) e^{i \la(t)}\right), \quad \txt{ for } (z,t) \in G.
\ee

\section{Proof of Theorem \ref{theorem} for two step MW groups}

In this section, we are going to prove Theorem \ref{theorem} on two step MW groups. We will follow the method of proof in the case of Heisenberg groups in \cite{S}. However, in this case, the method is more complicated because here the symplectic basis depends on the the elements $\lambda \in \Lambda.$ In order to proceed with the proof, we wish to define an analogous distribution on $\R^k$ which turns out to be supported on the unit sphere. We will need few lemmas to be able to define this distribution. First, we will calculate the partial derivatives of $\Phi_{\al \alpha}^{d(\cdot)}$ in the $\la$-variable.

\begin{lem} \label{lemphi}
For any $l \in \{1, \cdots, k\},$ the partial derivative $\ds{\frac{\partial \Phi_{\alpha \alpha}^{d(\cdot)}}{\partial \la_l} }$ can be written as a finite linear combination of functions of the form
\be \label{form}
\tau_{\alpha}(\cdot) \Phi_{\alpha+\gamma ~ \alpha + \delta}^{d(\cdot)}
\ee
where $\tau_{\alpha}$ is a smooth function of $\la$ on $\Lambda$ depending on $\alpha$ and $\gamma, \delta \in \{ 0, \pm e_j: j = 1, \cdots, n  \}$. Here the total number of terms in the above linear combination is independent of $\alpha.$
\end{lem}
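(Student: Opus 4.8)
The plan is to differentiate the matrix coefficient $\Phi_{\alpha\alpha}^{d(\lambda)}(z) = (\pi_\lambda(z,0)\phi_\alpha^{d(\lambda)}, \phi_\alpha^{d(\lambda)})$ with respect to $\lambda_l$ and track where the $\lambda$-dependence sits. There are three sources of $\lambda$-dependence: (i) the explicit exponential factor in $\pi_\lambda$, which carries the functions $d_j(\lambda)$ and the coordinates $x_j(\lambda), \xi_j(\lambda)$ built from the almost symplectic basis vectors $X_j(\lambda), Y_j(\lambda)$; (ii) the scaling operator $U(d(\lambda))$ defining $\phi_\alpha^{d(\lambda)} = U(d(\lambda))\phi_\alpha$; and (iii) the dependence of the basis vectors themselves (hence the decomposition $\vv = \xi_\lambda \oplus \eta_\lambda$) on $\lambda$. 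Since the excerpt establishes that $d_j$, $X_j(\lambda)$, $Y_j(\lambda)$, and the matrix coefficients of $D_\lambda$ all depend smoothly on $\lambda \in \Lambda$, applying $\partial/\partial\lambda_l$ is legitimate and produces a smooth coefficient function $\tau_\alpha(\lambda)$ times a derivative of the Hermite-type matrix coefficient.

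The key computational step is to express each resulting derivative back in terms of the $\Phi_{\beta\gamma}^{d(\lambda)}$. First I would recall the standard creation/annihilation relations for Hermite functions: $\xi_j \phi_\alpha$ and $\partial_{\xi_j}\phi_\alpha$ are each linear combinations of $\phi_{\alpha - e_j}$ and $\phi_{\alpha + e_j}$, with coefficients polynomial in $\sqrt{\alpha_j}$, and similarly the derivative $\partial_{r_l} U(r)\phi_\alpha$ at $r = d(\lambda)$ — coming from differentiating $r_l^{1/4}$ and the dilation $\sqrt{r_l}\,\xi_l$ — is a combination of $U(d(\lambda))\phi_{\alpha}$, $U(d(\lambda))\phi_{\alpha \pm 2e_l}$ (or $\phi_{\alpha\pm e_l}$ after regrouping), again with smooth-in-$\lambda$ coefficients. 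Differentiating the exponential brings down a factor linear in $\xi_j(\lambda)$ or $x_j(\lambda)$ (times $\partial_{\lambda_l} d_j$ or $\partial_{\lambda_l}$ of a basis coefficient, and possibly a $\lambda_l$-independent polynomial in the remaining coordinates which gets absorbed into $\tau_\alpha$ — here one must be slightly careful that the "coefficient" $\tau_\alpha$ is genuinely a function of $\lambda$ alone, which it is because such prefactors like $t_j$ or $y_j$ are not present: only central coordinates $t$ and the pairing $x_j\xi_j$ appear, and the $t$-part is killed by setting $t=0$ in $\Phi$). Multiplication by $\xi_j(\lambda)$ inside the inner product, after passing through $U(d(\lambda))$, acts as multiplication by $\xi_j$ up to the factor $d_j(\lambda)^{-1/2}$, hence again is a combination of shifts $\alpha \mapsto \alpha \pm e_j$ on one side of the inner product. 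Collecting, each term is $\tau_\alpha(\lambda)\,(\pi_\lambda(z,0)\phi_{\alpha+\gamma}^{d(\lambda)}, \phi_{\alpha+\delta}^{d(\lambda)}) = \tau_\alpha(\lambda)\,\Phi_{\alpha+\gamma\ \alpha+\delta}^{d(\lambda)}(z)$ with $\gamma, \delta \in \{0, \pm e_j\}$.

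Finally I would observe that the number of terms produced is bounded independently of $\alpha$: differentiating the exponential yields $O(n)$ terms (one per basis index times the finitely many pieces of the phase), differentiating $U(d(\lambda))$ on each side yields $O(n)$ terms, the Hermite shift relations split each of those into at most two terms, and the bookkeeping of which side of the inner product the shift lands on multiplies the count by a fixed constant — none of these bounds involve $\alpha$, only $n$ and $k$. I expect the main obstacle to be the careful bookkeeping in (iii): because the subspaces $\xi_\lambda, \eta_\lambda$ and the coordinates $(x(\lambda), y(\lambda))$ rotate with $\lambda$, differentiating the argument $\xi(\lambda) + y(\lambda)$ of $\phi$ in $(\pi_\lambda(\cdots)\phi)(\xi(\lambda))$ and the coordinate expressions inside the phase requires expanding in the moving basis and re-expressing the result in terms of the same $X_j(\lambda), Y_j(\lambda)$; verifying that this re-expression only ever costs smooth scalar factors and index shifts by $\pm e_j$, with no net growth in $\alpha$, is the delicate part. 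This is precisely the complication flagged at the start of the section ("the symplectic basis depends on the elements $\lambda \in \Lambda$"), absent in the Heisenberg case where $D_\lambda$ is essentially constant.
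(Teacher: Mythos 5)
Your overall strategy -- differentiate, convert everything into Hermite-type index shifts, and count terms -- is in the right spirit, but as written there is a genuine gap at the central computational step. When you differentiate the phase of $\pi_\la$ in $\la_l$, the factor brought down is not ``linear in $\xi_j(\la)$ or $x_j(\la)$'' with the rest absorbable into $\tau_\al(\la)$: the phase is $\sum_j d_j(\la)\bigl(x_j(\la)\xi_j(\la)+\tfrac{1}{2}x_j(\la)y_j(\la)\bigr)$, so $\partial_{\la_l}d_j$ brings down $x_j\xi_j+\tfrac{1}{2}x_jy_j$, and $\partial_{\la_l}$ of the moving coordinates brings down linear combinations of the $z_\rho(\la)$ with $\la$-dependent coefficients. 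Your parenthetical claim that ``prefactors like $y_j$ are not present'' is simply false, and even in the $x_j\xi_j$ term the $\xi_j$ can be handled inside the inner product but the leftover $x_j$ cannot: it is a function of $z$, not of $\la$, so it cannot go into $\tau_\al$. These coordinate multiplications on the matrix coefficient itself must be converted into index shifts, and that requires the recursion relations for $z_j\Phi_{\al\beta}$, $\bar z_j\Phi_{\al\beta}$, $\partial_{z_j}\Phi_{\al\beta}$, $\partial_{\bar z_j}\Phi_{\al\beta}$ (Thangavelu (1.3.19)--(1.3.24)), not merely the creation/annihilation relations for $\phi_\al$ on one side of the inner product. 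A second, related gap: differentiating $U(d(\la))\phi_\al$ on one side of the inner product produces $\xi_l\partial_{\xi_l}\phi_\al$, hence $\phi_{\al\pm 2e_l}$ on that side alone, i.e.\ a shift $\pm 2e_l$ in a single index, which lies outside the claimed set $\{0,\pm e_j\}$; your ``after regrouping'' does not explain the cancellation with the phase terms that would be needed to repair this.

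The paper avoids both problems by a different organization of the same data: it first identifies $\Phi_{\al\al}^{d(\la)}(z)=\Phi_{\al\al}\bigl(\sqrt{d_1(\la)}z_1(\la),\dots,\sqrt{d_n(\la)}z_n(\la)\bigr)$ with a dilated special Hermite function on $\C^n$ and only then applies the chain rule in $\la_l$. The derivative then lands as the operators $z_j\partial_{z_j}+\bar z_j\partial_{\bar z_j}$ (from the dilations, acting symmetrically so the shift is $e_j$ in \emph{each} index, never $2e_j$ in one) and $z_\rho\partial_{z_j}$, $z_\rho\partial_{\bar z_j}$ (from the moving basis, via $\partial z_j/\partial\la_l=\sum_{\nu,\rho}\partial_{\la_l}a_{j\nu}\,b_{\nu\rho}\,z_\rho(\la)$), each of which the cited recursions turn into a fixed number of shifted $\Phi$'s with smooth $\la$-coefficients. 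If you want to salvage your direct matrix-coefficient computation, you would need to import exactly those special Hermite recursions for coordinate multiplication and verify the cancellations; as it stands the argument does not close.
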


\begin{proof}
Realizing $\ds{z = \sum_{j=1}^n z_j(\la) Z_j(\la)}$ as $\left(z_1(\lambda), \cdots, z_n(\lambda)\right),$ it is easy to check that
$$\Phi_{\alpha \alpha}^{d(\la)}(z)= \Phi_{\alpha \alpha}\left(\sqrt{d_1(\la)}z_1(\lambda), \cdots, \sqrt{d_n(\la)}z_n(\lambda)\right),$$
where $\Phi_{\alpha \alpha}$ are the special Hermite functions on $\C^n$ (see \cite{T}). Therefore,
\beas
&& \frac{\partial}{\partial \la_l} \left(\Phi_{\alpha \alpha}^{d(\la)}(z)\right)\\
%&=& \frac{\partial}{\partial \la_l}\left( \left(  \prod_{j=1}^n \sqrt{d_j(\la)} \right) \Phi_{\alpha \alpha}\left(\sqrt{d_1(\la)}z_1(\lambda), \cdots, \sqrt{d_n(\la)}z_n(\lambda)\right)\right)\\
%&=& \sum_{j=1}^n \frac{\partial d_j}{\partial \la_l}(\la) \frac{1}{d_j(\la)} \Phi_{\alpha \alpha}^{d(\la)}(z) + \left(  \prod_{j=1}^n \sqrt{d_j(\la)} \right) \frac{\partial}{\partial \la_l} \Phi_{\alpha \alpha}\left(\sqrt{d_1(\la)}z_1(\la), \cdots, \sqrt{d_n(\la)}z_n(\la)\right)\\
&=& \sum_{j=1}^n \frac{1}{2d_j(\la)} \frac{\partial d_j}{\partial \la_l}(\la) \left(z_j \partial_{z_j} + \bar{z_j} \partial_{\bar{z_j}}\right) (\Phi_{\alpha \alpha}) \left(\sqrt{d_1(\la)}z_1(\lambda), \cdots, \sqrt{d_n(\la)}z_n(\lambda)\right)+\\
&&  \sum_{j=1}^n \sqrt{d_j(\la)}\left(\frac{\partial z_j}{\partial\la_l}(\la)(\partial_{z_j}\Phi_{\alpha \alpha})+ \frac{\partial \bar{z_j}}{\partial\la_l}(\la)(\partial_{\bar{z}_j}\Phi_{\alpha \alpha})\right) \left(\sqrt{d_1(\la)}z_1(\lambda), \cdots, \sqrt{d_n(\la)}z_n(\lambda)\right)\\
&=&I_1+I_2.\\
\eeas

Now, using the relations (1.3.19), (1.3.20), (1.3.23) and (1.3.24) of \cite{T} we get that
%\bes \partial_{z_j}\Phi_{\alpha,\alpha}=\frac{i}{2} \left[(2\alpha_j+2)^{\frac{1}{2}} \Phi_{\alpha+e_j,\alpha}+(2\alpha_j)^{\frac{1}{2}}\Phi_{\alpha, \alpha- e_j}\right],\ees
%\begin{eqnarray*}
%2 z_j \partial_{z_j}\Phi_{\alpha,\alpha}
%&=& \frac{i}{2} z_j\left[(2\alpha_j+2)^{\frac{1}{2}} \Phi_{\alpha+e_j,\alpha}+(2\alpha_j)^{\frac{1}{2}}\Phi_{\alpha, \alpha- e_j}\right]\\
%&=& -\frac{1}{2}\left[ (2\alpha_j+2)\Phi_{\alpha,\alpha}-(2 \alpha_j+2)^{\frac{1}{2}} (2 \alpha_j+2)^{\frac{1}{2}} \Phi_{\alpha+e_j,\alpha+e_j}+(2\alpha_j)^{\frac{1}{2}} (2\alpha_j)^{\frac{1}{2}} \Phi_{\alpha-e_j,\alpha-e_j}-(2\alpha_j)\Phi_{\alpha,\alpha}\right]\\
%&=& (\alpha_j+1)^{\frac{1}{2}} (\alpha_j+1)^{\frac{1}{2}}\Phi_{\alpha+e_j, \alpha+e_j}-\alpha_j^{\frac{1}{2}} \alpha_j^{\frac{1}{2}} \Phi_{\alpha-e_j,\alpha-e_j}+(\alpha_j - \alpha_j -1)\Phi_{\alpha,\alpha},\\
%2\bar{z}_j \frac{\partial}{\partial \bar{z}_j} \Phi_{\alpha,\alpha} &=& (\alpha_j+1)^{\frac{1}{2}} (\alpha_j+1)^{\frac{1}{2}} \Phi_{\alpha+e_j,\alpha+e_j}- \alpha_j^{\frac{1}{2}} \alpha_j^{\frac{1}{2}}  \Phi_{\alpha-e_j, \alpha-e_j} + (\alpha_j - \alpha_j -1) \Phi_{\alpha, \alpha}.
%\end{eqnarray*}
%$$\partial_{\bar{z}_j}\Phi_{\alpha,\alpha}=\frac{i}{2} \left[(2\alpha_j)^{\frac{1}{2}} \Phi_{\alpha-e_j,\alpha}+(2\alpha_j+2)^{\frac{1}{2}}\Phi_{\alpha, \alpha- e_j}\right],$$
\bes
\left(z_j\partial_{z_j}+\bar{z}_j\partial \bar{z}_j\right) \Phi_{\alpha \alpha}= (\alpha_j+1) \Phi_{\alpha+e_j ~ \alpha+e_j}- \alpha_j \Phi_{\alpha-e_j ~ \alpha-e_j} - \Phi_{\alpha \alpha}.
\ees
Since $d_j$ is a non-zero smooth function on $\Lambda$ for each $j,$ it follows that  we can write $I_1$ as a finite linear combination of terms which are of the form (\ref{form}).

In order to study $I_2,$ we first note that if $\ds{z = \sum_{j=1}^n z_i Z_i,}$ then from the definition of the transformation $D_\la$ discussed earlier, we get that
$$\left( \begin{array}{cc} z_1(\la) \\ z_2(\la)\\.\\.\\.\\z_n(\la) \end{array} \right)=D_\la \left( \begin{array}{cc}z_1 \\ z_2\\.\\.\\.\\z_n  \end{array} \right).$$
If $D_\la = \left(a_{pq}(\la)\right)_{p,q=1}^n$ and $D_\la^{-1} = \left(b_{pq}(\la)\right)_{p,q=1}^n,$
\begin{comment}
$J_{D_\la}^\C= \begin{bmatrix}
    a_{11}(\la) & a_{12}(\la) &  \dots  & a_{1n}(\la) \\
    a_{21}(\la) & a_{22}(\la) &  \dots  & a_{2n}(\la) \\
    \vdots & \vdots &  \ddots & \vdots \\
    a_{n1}(\la) & a_{n2}(\la) &  \dots  & a_{nn}(\la)
\end{bmatrix}$ and ${J_{D_\la}^\C}^{-1}= \begin{bmatrix}
    b_{11}(\la) & b_{12}(\la) &  \dots  & b_{1n}(\la) \\
    b_{21}(\la) & b_{22}(\la) &  \dots  & b_{2n}(\la) \\
    \vdots & \vdots &  \ddots & \vdots \\
    b_{n1}(\la) & b_{n2}(\la) &  \dots  & b_{nn}(\la)
\end{bmatrix}$
\end{comment}
then we have
$$\frac{\partial z_j}{\partial \la_l}(\la)=\sum_{\nu=1}^n \frac{\partial a_{j\nu}}{\partial \la_l}(\la)~z_\nu=\sum_{\nu=1}^n\sum_{\rho=1}^n \frac{\partial a_{j\nu}}{\partial \la_l}(\la) ~ b_{\nu \rho}(\la)z_{\rho}(\la).$$
Hence, it follows that
\beas
&& \frac{\partial z_j}{\partial\la_l}(\la)(\partial_{z_j}\Phi_{\alpha \alpha}) \left(\sqrt{d_1(\la)}z_1(\lambda), \cdots, \sqrt{d_n(\la)}z_n(\lambda)\right) \\
&=& \sum_{\nu=1}^n\sum_{\rho=1}^n \frac{1}{\sqrt{d_\rho(\nu)}}\frac{\partial a_{j\nu}}{\partial \la_l}(\la)b_{\nu \rho}(\la)(z_{\rho}\partial_{z_j})(\Phi_{\alpha \alpha}) \left(\sqrt{d_1(\la)}z_1(\lambda), \cdots, \sqrt{d_n(\la)}z_n(\lambda)\right).
\eeas
We can evaluate $\ds{\frac{\partial \bar{z_j}}{\partial \la_l}(\la)(\partial_{\bar{z}_j}\Phi_{\alpha \alpha})}$ similarly.
Since $a_{pq}$ and $b_{pq}$ are smooth functions on $\Lambda$ for $p,q=1, \cdots, n,$ using the relations (1.3.19), (1.3.20), (1.3.23) and (1.3.24) of \cite{T} again, we get that $I_2$ can also be expressed as a finite linear combination of terms  of the form (\ref{form}). Hence the lemma is proved.
\end{proof}

The next lemma will give an eigenfunction of $\mathcal{L}$ with eigenvalue $-|\la|$ for $\la \in \Lambda.$

\begin{lem}
For $\la \in \Lambda$, there exists a function $h_{\la}$ on $G$ such that
\bea \label{eigen}
\mathcal{L} h_{\la} = - |\la| h_{\la}, \quad \txt{ for }\ \la \in \Lambda.
\eea
\end{lem}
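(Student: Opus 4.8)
The plan is to take the ground-state matrix coefficient already furnished by (\ref{eigenfunction}) and to rescale its eigenvalue down to $-|\la|$ by means of the canonical dilations of $G$. Fix $\la\in\Lambda$, write $c(\la)=\sum_{j=1}^n d_j(\la)$, which is strictly positive since each $d_j(\la)>0$, and put $g_\la(z,t)=\Phi_{00}^{d(\la)}(z)\,e^{i\la(t)}$. Taking $\al=0$ in (\ref{eigenfunction}) gives $\mathcal L g_\la=-c(\la)\,g_\la$, and $g_\la$ is not identically zero since $\Phi_{00}$ is, up to a constant, the Gaussian $e^{-|\cdot|^2/4}$ on $\C^n$. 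So a nonzero $\mathcal L$-eigenfunction is already at hand; only the value of its eigenvalue has to be corrected.

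For this I would exploit the grading $\g=\vv\oplus\z$. For $r>0$, the linear map that is $r\cdot\mathrm{id}$ on $\vv$ and $r^2\cdot\mathrm{id}$ on $\z$ is a Lie algebra automorphism of $\g$ (brackets land in $\z$), so by simple connectedness it integrates to an automorphism $\delta_r$ of $G$, given in exponential coordinates by $\delta_r(V,T)=(rV,r^2T)$. A short computation with left-invariant vector fields gives $V_j(f\circ\delta_r)=r\,(V_jf)\circ\delta_r$ for every basis vector field $V_j$ of $\vv$; squaring and summing over $j$ then yields
\[
\mathcal L(f\circ\delta_r)=r^2\,(\mathcal L f)\circ\delta_r .
\]

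Finally, set $r(\la)=\sqrt{|\la|/c(\la)}$ and define $h_\la:=g_\la\circ\delta_{r(\la)}$, i.e. $h_\la(z,t)=\Phi_{00}^{d(\la)}\!\big(r(\la)z\big)\,e^{i r(\la)^2\la(t)}$. Then
\[
\mathcal L h_\la=r(\la)^2\,(\mathcal L g_\la)\circ\delta_{r(\la)}=-r(\la)^2 c(\la)\,h_\la=-|\la|\,h_\la ,
\]
and $h_\la\not\equiv 0$ because $g_\la\not\equiv 0$; this is (\ref{eigen}). (Equivalently, since $d_j(s\la)=s\,d_j(\la)$ for $s>0$, one may write $h_\la=\Phi_{00}^{d(\mu)}(\cdot)\,e^{i\mu(\cdot)}$ with $\mu=\tfrac{|\la|}{c(\la)}\la$; note also that $h_\la$ depends smoothly on $\la\in\Lambda$, which will be useful when these functions are later integrated against a measure on the unit sphere of $\z^*$.) Given (\ref{eigenfunction}), the lemma is short; the only points requiring care are getting the homogeneity exponent and the direction of composition right in $\mathcal L(f\circ\delta_r)=r^2(\mathcal L f)\circ\delta_r$, and recording that $h_\la$ is not the zero function.
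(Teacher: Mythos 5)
Your proof is correct and in substance the same as the paper's: the paper rescales $\la$ to $\tilde{\la}=\frac{|\la|}{(2\al+1)\cdot d(\la)}\,\la$ and sets $h_\la(z,t)=\Phi_{\al\al}^{d(\tilde{\la})}(z)e^{i\tilde{\la}\cdot t}$, which for $\al=0$ is exactly the function you produce via the dilation $\delta_{r(\la)}$ (your closing remark makes this identification explicit, using the same homogeneity $d_j(s\la)=s\,d_j(\la)$ that the paper records as its relation for the polar decomposition). The only point worth flagging is that the paper retains a general multi-index $\al$ because the later Wiener--Tauberian argument needs the whole family of such eigenfunctions over all $\al\in\N^n$, whereas your ground-state choice $\al=0$ proves the lemma as stated but would have to be generalized (trivially, by replacing $\Phi_{00}$ with $\Phi_{\al\al}$ and $c(\la)$ with $(2\al+1)\cdot d(\la)$) for the subsequent use.
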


\begin{proof}
We note that if $\la = |\la| \omega$ is the polar representation of $\la \in \R^k$ where $\omega \in S^{k-1},$ one can easily see that
\be \label{relation}
B_\la = |\la| B_{\omega}, \quad X_j(\la) = X_j(\omega), \quad Y_j(\la) =  Y_j(\omega) \quad \txt{ and } \quad d_j(\la) = |\la| d_j(\omega),
\ee
for each $j= 1, 2, \cdots, n.$ We now define
\bes
\tilde{\la} = \frac{|\la|}{(2\alpha+1)\cdot d(\la)} \la.
\ees
If $\tilde{\la} = |\tilde{\la}| \tilde{\omega},$ then from the relation (\ref{relation}) we get that $d(\omega) = d(\tilde{\omega}).$ Then it also follows that
\be \label{lambda}
\la = \frac{(2\alpha+1)\cdot d(\tilde{\la})}{|\tilde{\la}|} \tilde{\la}.
\ee
\begin{comment}
\bes
\tilde{\la} =  \frac{1}{(2\alpha+1)\cdot d(w)} \la = \frac{1}{(2\alpha+1)\cdot d(\tilde{w})} \la  = \frac{|\tilde{\la}|}{(2\alpha+1)\cdot d(\tilde{\la})} \la
\ees
\end{comment}
We now define
\bes
h_{\la}(z,t) = \Phi_{\al \alpha}^{d(\tilde{\la})}(z) e^{i \tilde{\la}\cdot t}.
\ees
Observing that $(2\al+1)\cdot d(\tilde{\la}) =  |\la|$ and using (\ref{eigenfunction}) we obtain
\bes
\mathcal{L} h_{\la}(z,t) = -|\la| ~ h_{\la}(z,t), \quad \txt{ for } (z,t) \in G.
\ees
\end{proof}

Now we are in a position to define the distribution on $\R^k$ mentioned in the beginning of this section. Let us fix a function $\varphi \in C_c^{\infty}(\Lambda).$ For a test function $\psi \in C_c^{\infty}(\R^k),$ we define
\be \label{distribution} T^{\varphi}(\psi)=\int_G f_0(z,t) \int_\Lambda h_\la(z,t) \psi(\la) \varphi(\lambda) d\la ~ dz ~ dt, \ee
which resembles a kind of Fourier transform of $f_0.$ We note that in order to define the distribution $T^{\varphi}$ on $\R^k,$ we have effectively considered the test function $\varphi \psi$ supported on $\Lambda$ instead of $\R^k$ to avoid some messy boundary terms while doing few integration by parts arguments later in the proof. In the next lemma, we prove that $T^{\varphi}$ is a distribution on $\R^k$.
\begin{lem} \label{distlemma}
$T^{\varphi}$ is a distribution on $\R^k$ with $|T^{\varphi}(\psi)| \lesssim \|\psi\|_{2},$ for $\psi \in C_c^{\infty}(\R^k),$ where $\|\psi\|_{l} = \sup\{|D^{\rho}\psi(\la)| : \la \in \R^k, |\rho| \leq l\}.$
\end{lem}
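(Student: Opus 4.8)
The goal is to show the linear functional $T^{\varphi}$ on $C_c^\infty(\R^k)$ is bounded by $\|\psi\|_2$, which in particular shows it is a distribution of order at most $2$. The natural strategy is to use the hypotheses \eqref{seq} and \eqref{bdd} — so far only \eqref{bdd} has been used implicitly and \eqref{seq} not at all — together with the fact that $h_\la$ is an eigenfunction of $\mathcal L$ with eigenvalue $-|\la|$. The key mechanism is that applying powers of $\mathcal L$ to $h_\la$ inside the integral \eqref{distribution} and transferring them onto $f_0$ via integration by parts (self-adjointness of $\mathcal L$, which is legitimate since $f_0$ and all derivatives are controlled and $h_\la\psi\varphi$ has compact support in $\la$) converts multiplication by $|\la|$ into a shift in the index $k$ of the sequence $\{f_k\}$.

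First I would record the transference identity: for any $N\in\N$,
\bes
\int_G f_0(z,t)\int_\Lambda \mathcal L^N\!\big(h_\la(z,t)\big)\,\psi(\la)\varphi(\la)\,d\la\,dz\,dt
= \int_G (\mathcal L^N f_0)(z,t)\int_\Lambda h_\la(z,t)\,\psi(\la)\varphi(\la)\,d\la\,dz\,dt,
\ees
where the left side equals $(-1)^N T^{\varphi}\big(|\la|^N\psi\big)$ by \eqref{eigen}, and the right side equals $\int_G f_N(z,t)\int_\Lambda h_\la(z,t)\psi(\la)\varphi(\la)\,d\la\,dz\,dt$ by \eqref{seq} (iterated). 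Because $\|f_N\|_\infty\le M$ for all $N$ and $\varphi$ has compact support in $\Lambda$, while $h_\la(z,t)=\Phi_{\al\al}^{d(\tilde\la)}(z)e^{i\tilde\la\cdot t}$ is, for each fixed $(z,t)$, a smooth compactly-supported-in-$\la$ integrand whose $\la$-integral against a function of $\la$ is controlled — this is where Lemma \ref{lemphi} enters, giving us control of $\partial_\la$-derivatives of $\Phi_{\al\al}^{d(\cdot)}$ — one gets an estimate of the shape $\big|T^{\varphi}(|\la|^N\psi)\big|\le C_N\sup_\la|\psi(\la)|$ (or with a few derivatives of $\psi$, if integration by parts in $t$ is needed to handle the $e^{i\tilde\la\cdot t}$ factor and to make the $G$-integral converge). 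The point is that $T^\varphi$ is controlled against $|\la|^N\psi$ for every $N$.

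Next I would combine these estimates across $N$. Writing an arbitrary $\psi\in C_c^\infty(\R^k)$ and using that $\varphi$ is supported in a fixed compact set $K\subset\Lambda$ on which $|\la|$ is bounded above and below, I would choose $N=0$ and $N=$ (some fixed value, or sum over a range) to dominate $\psi$ pointwise on $K$ by a combination of $|\la|^N\psi$; more cleanly, since $h_\la$ and its $\la$-derivatives up to some fixed order are uniformly bounded on $\mathrm{supp}\,\varphi$ (by Lemma \ref{lemphi}, the number of terms being independent of $\alpha$ — though here $\alpha$ is fixed anyway), and since $f_0\in L^\infty$, one sees directly that $|T^\varphi(\psi)|\lesssim\int_{\mathrm{supp}\,\varphi}|\psi(\la)|\,d\la$ after integrating by parts in $t$ a bounded number of times to gain decay in $z$ — but that naive bound would only give $\|\psi\|_\infty$ or worse, since $f_0$ is merely bounded and $\int_G|f_0|$ need not be finite. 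This is the crux: one cannot simply pull absolute values inside, because $f_0$ is only bounded, not integrable, so the $(z,t)$-integral in \eqref{distribution} is not absolutely convergent a priori.

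The main obstacle, therefore, is making sense of and estimating the $G$-integral despite $f_0$ being merely bounded. The resolution I anticipate: integrate by parts in the central variable $t$ — each $t_l$-derivative hitting $e^{i\tilde\la\cdot t}$ produces a factor $\tilde\la_l$ (smooth and bounded on $\mathrm{supp}\,\varphi$) while each $t_l$-derivative can instead be moved onto $\psi\varphi$ (giving one derivative of $\psi$, hence the appearance of $\|\psi\|_2$ rather than $\|\psi\|_0$); and use left-invariant vector fields from $\vv$ together with the eigenfunction property to integrate by parts in the $z$-variable, converting into applications of $\mathcal L$ on $f_0$ (bounded, via \eqref{seq}) and into $|\la|$-factors, until one has enough decay in the special Hermite functions $\Phi_{\al\al}^{d(\tilde\la)}$ — which decay rapidly in $z$ — to render $\int_G$ absolutely convergent with a bound depending only on $M$, on $\varphi$ (fixed), and on finitely many (at most two) derivatives of $\psi$. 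The bookkeeping that the number of terms in Lemma \ref{lemphi} is independent of $\alpha$ guarantees the constants do not blow up. Assembling these gives $|T^\varphi(\psi)|\lesssim\|\psi\|_2$, and since $C_c^\infty$ functions bounded in this seminorm are dense-enough / the functional is clearly linear, $T^\varphi$ extends to a distribution of order $\le 2$ on $\R^k$, as claimed.
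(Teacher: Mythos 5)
You have located the real difficulty correctly --- the integral over $G$ in (\ref{distribution}) is not absolutely convergent because $f_0$ is merely bounded, so one must manufacture decay in the central variable $t$ --- but the mechanism you propose for producing that decay does not work, and it is the heart of the lemma. You suggest ``integrating by parts in the central variable $t$'' and moving the resulting $t_l$-derivatives onto $\psi\varphi$; but $\psi$ and $\varphi$ are functions of $\la$ alone, so there is nothing in the $t$-integral to receive those derivatives, and in any case integration by parts in $t$ cannot create decay in $t$. The decay must come from the oscillation of $e^{i\tilde{\la}\cdot t}$ exploited through the $\la$-integral: the paper writes
\begin{equation*}
e^{i\tilde{\la}\cdot t}=\frac{1}{1+|t|^2}\left(1-\sum_{l=1}^{k}\frac{\partial^2}{\partial \tilde{\la}_l^2}\right)e^{i\tilde{\la}\cdot t}
\end{equation*}
and integrates by parts \emph{in $\la$} over $\Lambda$ (this is precisely why the cutoff $\varphi\in C_c^{\infty}(\Lambda)$ was built into the definition of $T^{\varphi}$: it removes the boundary terms). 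This single move simultaneously produces the weight $(1+|t|^2)^{-1}$ making the $t$-integral converge (strictly, for $\dim\z=k\geq 2$ one should iterate to get $(1+|t|^2)^{-N}$ with $2N>k$, at the cost of $\|\psi\|_{2N}$), and places two $\la$-derivatives on $\Phi_{\alpha\alpha}^{d(\tilde{\la})}(z)\varphi(\la)\psi(\la)$ --- which is exactly where Lemma \ref{lemphi} and the seminorm $\|\psi\|_2$ enter. After that everything converges absolutely: the $z$-integral because the special Hermite functions $\Phi_{\alpha+\gamma\,\alpha+\delta}$ are Schwartz (no integration by parts in $z$ is needed), and the $\la$-integral because $\varphi$ is compactly supported in $\Lambda$.

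Two further points. Your opening $\mathcal{L}^N$-transference identity is a digression here: it is the tool used later to show that $T^{\varphi}$ is supported on the unit sphere, but it contributes nothing to the continuity estimate of this lemma, whose proof uses only the single bound $\|f_0\|_{\infty}\leq M$ from (\ref{bdd}) and never invokes (\ref{seq}). Likewise, your plan to integrate by parts in $z$ ``converting into applications of $\mathcal{L}$ on $f_0$'' is unnecessary and would only reintroduce the convergence problems you are trying to avoid.
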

\begin{proof}
For  $\psi \in C_c^{\infty}(\R^k),$ we note that
\beas
|T^{\varphi}(\psi)|
&=&\left|\int_G f_0(z,t) \int_\Lambda e^{i\tilde{\la}\cdot t}\Phi_{\alpha \alpha}^{d(\tilde{\la})}(z) \varphi(\la) \psi(\la) d\la dz dt\right| \\
%&=&\left|\int_G f_0(z,t) \frac{1}{1+|t|^2}\int_\Lambda (1+|t|^2)e^{i\tilde{\la}\cdot t}\Phi_{\alpha \alpha}^{d(\tilde{\la})}(z) \varphi(\la) \psi(\la) d\la dz dt\right| \\
&=&\left|\int_G f_0(z,t) \frac{1}{1+|t|^2}\int_\Lambda e^{i\tilde{\la}\cdot t} \left(1-\sum_{l=1}^k \frac{\partial^2}{\partial \tilde{\la}_l^2}\right) \left(\Phi_{\alpha \alpha}^{d(\tilde{\la})}(z) \varphi(\la) \psi(\la) \right) d\la dz dt\right| \\
&\leq& I_1 + \sum_{l=1}^k I_{1l} + \sum_{l=1}^k I_{2l} + 2\sum_{l=1}^k I_{3l},
\eeas
where
\beas
I_1 &=&\left|\int_G \frac{f_0(z,t)}{1+|t|^2}\int_\Lambda e^{i\tilde{\la}\cdot t}\Phi_{\alpha \alpha}^{d(\tilde{\la})}(z) \varphi(\la) \psi(\la) d\la dz dt\right|, \\
I_{1l} &=& \left|\int_G  \frac{f_0(z,t)}{1+|t|^2}\int_\Lambda e^{i\tilde{\la}\cdot t} \frac{\partial^2}{\partial \tilde{\la}_l^2} \left(\Phi_{\alpha \alpha}^{d(\tilde{\la})}(z)\right) \varphi(\la) \psi(\la) d\la dz dt\right|, \\
I_{2l} &=& \left|\int_G  \frac{f_0(z,t)}{1+|t|^2}\int_\Lambda e^{i\tilde{\la}\cdot t} \Phi_{\alpha \alpha}^{d(\tilde{\la})}(z) \frac{\partial^2}{\partial \tilde{\la}_l^2} \left( \varphi(\la) \psi(\la)\right) d\la dz dt\right|, \\
I_{3l} &=& \left|\int_G  \frac{f_0(z,t)}{1+|t|^2}\int_\Lambda e^{i\tilde{\la}\cdot t} \frac{\partial}{\partial \tilde{\la}_l} \left(\Phi_{\alpha \alpha}^{d(\tilde{\la})}(z)\right) \frac{\partial}{\partial \tilde{\la}_l} \left(\varphi(\la) \psi(\la)\right) d\la dz dt\right|. \\
\eeas
for  $l = 1, 2, \cdots, k.$ Using Lemma \ref{lemphi} and (\ref{lambda}) we can write each of $I_1$, $I_{1l}$, $I_{2l}$ and $I_{4l}$ as finite linear combinations of the form
\bes
\left|\int_G  \frac{f_0(z,t)}{1+|t|^2}\int_\Lambda e^{i\tilde{\la}\cdot t} ~  \Theta_{\alpha}(\la) ~\Phi_{\alpha + \gamma ~ \alpha + \delta}^{d(\tilde{\la})}(z) ~ \tilde{\varphi}(\la) ~ \tilde{\psi}(\la) ~ d\la ~ dz ~ dt\right|,
\ees
where $\Theta_{\alpha}$ is a smooth function of $\la$ on $\Lambda$ depending on $\alpha$ and the quantities $\gamma, \delta \in \{ 0, \pm e_j, \pm 2 e_j: j = 1, \cdots, n  \}$ and $\tilde{\varphi}, \tilde{\psi}$ are some derivatives of $\varphi, \psi$ of orders $\leq 2$ respectively. Since $\tilde{\varphi} \in C_c^{\infty}(\Lambda)$ and $\tilde{\psi} \in C_c^{\infty}(\R^k),$ we see that the above expression can be bounded by
\beas
&& C^{\varphi}_{\alpha} M \left( \int_\mathfrak{z}  \frac{dt}{1+|t|^2} \right) \int_{\mathfrak{v}} \int_\Lambda \left|\Phi_{\alpha + \gamma ~ \alpha + \delta}^{d(\tilde{\la})}(z) ~ \tilde{\psi}(\la) \right|~ d\la ~ dz \\
&\lesssim& \left( \int_\mathfrak{v} \left|\Phi_{\alpha + \gamma ~ \alpha + \delta}(z)\right| dz \right) \int_{\Lambda} \frac{|\tilde{\psi}(\la)|}{d_1(\la)^{1/2} \cdots d_n(\la)^{1/2}} d\la \\
&\lesssim& \|\psi\|_{2}.
\eeas
Hence we can conclude that $T^{\varphi}$ is a distribution.
\end{proof}

Now we are in a position to prove the main theorem of this paper (Theorem \ref{theorem}) for the case of two step MW groups.

\begin{center}
\underline{\textbf{Proof of Theorem \ref{theorem} for two step MW groups}}
\end{center}

First we wish to show that the distribution $T^{\varphi}$ defined in (\ref{distribution}) is supported on the unit sphere in $\R^k.$ We consider a test function $\psi$ supported in the set $\{\la \in \R^k : |\lambda| > 1 \} .$ Then using the conditions (\ref{seq}) and (\ref{bdd}) we can easily see that for any $l \in \N$
\bes
|T^{\varphi}(\psi)| = \left| \left\langle f_0,\int_{\Lambda } h_{\lambda} \varphi(\la) \psi(\la) d\lambda \right\rangle_G \right| =\left| \left\langle \mathcal{L}^lf_{0},\int_{\Lambda } h_{\lambda}\frac{\varphi(\la) \psi(\la)}{(-|\lambda|)^{l}} d\lambda \right\rangle_G \right|\leq  M
\left\|\int_{\Lambda} h_{\lambda}
\frac{\varphi(\la) \psi(\la)}{(-|\lambda|)^{l}} d\lambda \right\|_{L^1(G)}.
\ees
Applying the arguments used in Lemma \ref{distlemma} we get that there exists $K>1$ such that
\beas\left\|\int_{\Lambda} h_{\lambda}
\frac{\varphi(\la) \psi(\la)}{(-|\lambda|)^{l}} d\lambda \right\|_{L^1(G)}\lesssim \frac{\|\psi\|_{2}}{K^l},
\eeas
which goes to $0$ as $l \rightarrow \infty.$ So it follows that $T^{\varphi}$ is supported on $\{ \la \in \R^k : |\la| \leq 1 \}.$ Similarly considering a test function $\psi$ supported in $\{ \la \in \R^k : |\la| < 1\},$ we can show that $T^{\varphi}$ is supported on $\{ \la \in \R^k: |\la| \geq 1 \}.$ So we can conclude that $T^{\varphi}$ is supported on the set $\{ \la\in \R^k : |\la| = 1 \}.$

Using structure theorem for distributions supported on the unit sphere, we get that
\bes
\ds{ \left\langle f_0,\int_{\Lambda } h_{\lambda} \varphi(\la) \psi(\lambda) d\lambda \right\rangle_G} = \ds{\sum_{j=0}^N \left. \left(r\frac{\partial}{\partial r} \right)^j {\left\langle T_j,\psi_{r}
\right\rangle}_{S^{k-1}} \right|_{r=1},}
\ees
where $T_j$ are distributions on the unit sphere $S^{k-1}$ and $\psi_r(\omega) = \psi(r \omega)$ where $r>0, \omega \in S^{k-1}.$ Now, using the relation (\ref{seq}) we get that for any $l \in \N,$
\bes
\ds{ \left\langle f_l,\int_{\Lambda } h_{\lambda} \varphi(\la) \psi(\lambda) d\lambda \right\rangle_G} = \ds{\sum_{j=0}^N \left. \left(r\frac{\partial}{\partial r} \right)^j (-r)^{l}{\left\langle T_j,\psi_{r}
\right\rangle}_{S^{n-1}} \right|_{r=1}.}
\ees
Using (\ref{bdd}) we note that the quantity on the left hand side is uniformly bounded in $l$ whereas the quantity on the right hand side is a polynomial of degree $N$ in $l.$ So we can infer that $N=0$ in the above expansion and thereby obtain that
\bes
\left\langle f_0 + f_1,
\int_{\Lambda} h_{\lambda} \varphi(\la) \psi(\lambda) d\lambda \right\rangle_G = 0.
\ees
We note that the above is true for each $\varphi \in C_c^{\infty}(\Lambda)$ and for each $\psi \in C_c^{\infty}(\R^k).$ Using Urysohn's Lemma given $\varphi \in C_c^{\infty}(\Lambda)$ we can choose $\psi \in C_c^{\infty}(\R^k)$ such that $\varphi \psi \equiv \varphi$ on $\Lambda.$ So we get that
\bes
\left\langle f_0 + f_1,
\int_{\Lambda} h_{\lambda} \varphi(\la) d\lambda \right\rangle_G = 0, \quad \txt{ for each } \varphi \in C_c^{\infty}(\Lambda).
\ees
However, we need to show that $f_0 + f_1 = 0.$ For this, we define $f=f_0 + f_1$ and $\ds{F_{\alpha} = \int_{\Lambda} h_{\lambda} \varphi(\lambda) d\lambda}.$

We consider the class of functions
\bes
\mathcal{F} = \left\{ g \in L^1(G): \int_G fg = 0 \right\}.
\ees
Clearly, $F_{\alpha} \in \mathcal{F}$ for each $\alpha \in \mathbb{N}^n.$ Since $\mathcal L$ is left invariant, the assumption (\ref{seq}) of the theorem holds for all left translates of $f_k$'s. It follows that all left translates of $F_{\alpha}$ are in $\mathcal{F}.$ Moreover, if we define $\tilde{g}(z,t) = g((z,t)^{-1}) = g(-z, -t)$ for $(z,t) \in G,$ then using the relation $\phi_{\alpha \alpha}^{d(\la)}(z) = \phi_{\alpha \alpha}^{d(\la)}(-z)$ it is easy to see that $\tilde{F_\alpha} \in \mathcal{F}$ for each $\alpha \in \N^n.$ Further we can deduce that the right translates of $F_{\alpha}$ are in $\mathcal{F}.$ Hence it follows that the two-sided ideal generated by the set $\{F_\alpha : \alpha \in \N^n\}$ is contained inside $\mathcal{F}.$

Now, we note that for each $\la\in\Lambda$, the group Fourier transform of $F_\alpha$ is given by
\bes\widehat{F_\alpha}(\la) = (2\pi)^n\left(\prod_{j=1}^n d_j (\la)\right)\varphi(\la)P^{d(\la)}_\alpha\neq 0,\ees
 where $P^{d(\la)}_\alpha$ is the projection on the eigenspace associated to the eigenvalue $(2\alpha+1)\cdot d(\la)$ corresponding to the operator $H(d(\la)).$ One can also easily check that the Fourier transforms associated to the one dimensional irreducible representations of  $F_0$ vanish nowhere for suitable $\varphi$.
Now we can apply the Wiener-Tauberian type theorem corresponding to the two-sided action of $G$ on itself proved in \cite{L} to get that $\mathcal{F}=L^1(G)$. Hence, we can conclude that $f=0$.
\qed

\section{Proof of Theorem \ref{theorem} for two step non-MW groups}
Let $G$ be a non-MW group and $\g$ be the corresponding Lie algebra with the decomposition $\g = \vv \oplus \z$. Following \cite{MR}, we can associate a two step MW Lie algebra $\h$ with $\g$ by the following
\bes
\h = (\vv \times \vv^*) \oplus (\z \times \R) = \h_\vv \oplus \h_\z
\ees
with the Lie bracket $[\cdot, \cdot] : \h_\vv \times \h_\vv \ra \h_\z$ given by
\bes
[(V, \eta), (V', \eta')] = ([V,V'], \eta'(V) - \eta(V')).
\ees
Therefore, the group $H$ associated with the Lie algebra $\h$ can be identified with $\R^m\times \R^m\times \R^k\times \R^k$ endowed with the group operation
\bes
(v, \eta, z, t)(v', \eta', z', t')=\left(v+v',\eta+\eta', z+ z'+\frac{1}{2}[v,v'], t+t'+\frac{1}{2}(\eta'(v) - \eta(v'))\right).
\ees
Let $V_1, V_2,\cdots, V_m$ be left invariant vector fields which form a basis of $\vv$ and we coordinatise $\vv$ with respect to this basis by $V=\sum_{j=1}^m v_j V_j$. Then the following left invariant vector fields will form a basis of $\h_\vv$:
\beas
\widetilde{V_i}&=&V_i+\frac{1}{2}\eta_i \frac{\partial}{\partial t},\quad 1\leq i\leq m,\\
\widetilde{V}_{m+i}&=& \frac{\partial}{\partial \eta_i}-\frac{1}{2}v_i\frac{\partial}{\partial t},\quad 1\leq i\leq m.
\eeas
Hence the sub-Laplacian of $\h$ is given by
$$\widetilde{\mathcal{L}}=\sum_{i=1}^{2m} \widetilde{V_i}^2.$$

\begin{center}
\underline{\textbf{Proof of Theorem \ref{theorem} for two step non-MW groups}}
\end{center}
Let us define $\tilde{f}_l(v, \eta,z,t)= f_l(v, z)$.
Then it can be easily seen that
\beas
\widetilde{V_i}\tilde{f}_l&=&V_i f_l,\quad 1\leq i\leq m,\\
\widetilde{V}_{m+i}\tilde{f}_l&=& 0,\quad 1\leq i\leq m.
\eeas
Hence, we have
\bes
\widetilde{\mathcal{L}} \tilde{f}_l(v,\eta,z,t)=\mathcal{L}f_l(v,z)=f_{l+1}(v,z)=\tilde{f}_{l+1}(v, \eta,z,t), \quad l\in \Z.
\ees
As, $\h$ is MW, we can apply Theorem \ref{theorem} in case of MW groups to get that
\bes
\widetilde{\mathcal{L}} \tilde{f}_0=-  \tilde{f}_0.
\ees
So, finally we have
\bes
\mathcal{L}f_0=-  f_0.
\ees
This completes the proof of Theorem \ref{theorem}.
\qed

\begin{center}
\bf{Acknowledgements}
\end{center}

This work was supported by Department of Science and Technology, India (INSPIRE Faculty Award to Sayan Bagchi). We would like to thank Prof. Swagato K. Ray for suggesting this problem and also Prof. S. Thangavelu and Prof. E. K. Narayanan for some useful discussions and suggestions.

\end{document}